\newtheorem{thm}{Theorem}[section]
\newtheorem{lem}[thm]{Lemma}
\newtheorem{prop}[thm]{Proposition}
\theoremstyle{definition}
\theoremstyle{remark}
\numberwithin{equation}{section}
\newcommand{\Rmnum}[1]{\expandafter\@slowromancap\romannumeral #1@}
\newcommand{\bG}{\mathbf{G}}
\newcommand{\bH}{\mathbf{H}}
\newcommand{\bV}{\mathbf{V}}
\newcommand{\bW}{\mathbf{W}}
\newcommand{\D}{\mathbb D}
\newcommand{\R}{\mathbb{R}}
\newcommand{\SL}{\operatorname{SL}}
\newcommand{\GL}{\operatorname{GL}}
\newcommand{\Z}{\mathbb{Z}}
\newcommand{\Q}{\mathbb{Q}}
\newcommand{\F}{\mathbb{F}}
\newcommand{\C}{\mathbb{C}}
\newcommand{\Ad}{\operatorname{Ad}}
\newcommand{\dd}{\; \mathrm{d}}
\newcommand{\df}{{\, \stackrel{\mathrm{def}}{=}\, }}
\begin{document}

\title{Measure Rigidity for solvable group actions in the space of lattices}

\author{Manfred Einsiedler}
\address{
Departement Mathematik, ETH Z\"urich, R\"amistrasse 101, 8092 Zurich, Switzerland}
\email{manfred.einsiedler@math.ethz.ch}

\author{Ronggang Shi}
\address{Shanghai Center for Mathematical Sciences, Fudan University, 220 Handan Road, Shanghai 200433, China}
 \email{ronggang@fudan.edu.cn}


\subjclass[2000]{Primary  37C85; Secondary 28A33, 22E40.}

\date{}


\keywords{homogeneous dynamics,  measure rigidity}

\begin{abstract}
{ We study invariant
probability measures}
on the  homogeneous space  $\SL_n(\R)/\SL_n(\Z)$ for  the
action of  subgroups of $\SL_n(\R)$ of  the form $SF$ where  $F$  is  generated by one parameter unipotent 
groups and $S$ is a one parameter $\R$-diagonalizable group normalizing $F$.
Under the assumption that $S$ contains an element with only one  eigenvalue   less  than one (counted with multiplicity) and others bigger  than one
 we prove that all the  $SF$ invariant and ergodic probability measures on $\SL_n(\R)/\SL_n(\Z)$ 
 are homogeneous.

\end{abstract}

\maketitle

\markright{}

\section{introduction}\label{sec;intro}
Let $n\ge 2$ be a positive integer. 
Let  $G=\SL_n(\R), \Gamma=\SL_n(\Z)$ and $X=G/\Gamma$.
Let  $S$ be a  one parameter $\R$-diagonalizable subgroup of $G$, i.e.~$S$ is
 conjugate in $G$ to 
 a subgroup  with diagonal matrices. 
  Let $F$ be a subgroup of $G$ normalized by $S$ and  generated by one parameter 
unipotent subgroups.
This paper is about the  classification of  $SF$ invariant and ergodic probability measures on $X$.

This type of questions has  already been  studied earlier by Margulis-Tomanov \cite{mt96} and Mozes \cite{m95} using Ratner's 
measure rigidity theorem \cite{r91} for  $F$ actions. In particular, it is proved in 
  \cite{mt96} and \cite{m95} that if  
  $SF$ is an epimorphic subgroup  of   $H\le G$   and $H$ is  generated 
  by one parameter  unipotent subgroups, then any $SF$ invariant and ergodic  probability measure is $H$ invariant and hence homogeneous 
by Ratner's theorem. Recently, in a joint paper with Barak Weiss the second named author  \cite{sw} proved 
  measure rigidity for certain non-epimorphic sovable subgroup actions on $X$ for   $n=3$. More precisely, it is proved
  in \cite[Theorem 5]{sw} that
  if $SF$ is nonabelian and $S$ has no nonzero fixed vectors in $\R^3$, then $SF$ acts uniquely ergodically on $X$. 
It is noticed already in \cite{sw} that even for $n=4$, there are homogeneous and even non-homogeneous $SF$ invariant and
ergodic probability measures under similar assumptions.   
Therefore, to get rigidity results one needs to put further restrictions on the group $S$. 

A one parameter $\R$-diagonalizable  subgroup of $G$ is said to be simple if 
it is  conjugate to   \[
\{\mathrm{diag}({e^{r_1 t}, \ldots, e^{r_{n-1}t}, e^{-t} ): t\in \R}
\}
\]
for a  probability vector $(r_1, \ldots, r_{n-1})\in \R^{n-1}$ with positive entries.
The following is the main result of this paper.

\begin{thm}\label{thm;rigidity}
	Let $F$ be a subgroup of $G=\SL _n(\R)$  $(n\ge 2)$ generated by one parameter 
	unipotent subgroups. 
Let $S=\{a_t: t\in \R \}$  be a simple  one parameter 
$\R$-diagonalizable
 subgroup of $G$  normalizing  $F$. 
 Then any $SF$ invariant and ergodic probability measure on $X=G/\Gamma$  $(\Gamma=\SL_n(\Z))$ is homogeneous.
 \end{thm}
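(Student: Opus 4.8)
The overall strategy is to run a variant of the low-entropy method of Einsiedler–Katok–Lindenstrauss together with the linearization technique for unipotent flows, but organized so that the single ``expanding'' diagonal direction $a_t$ does the bulk of the work. Fix an $SF$-invariant ergodic probability measure $\mu$ on $X$. Because $S$ is simple, $a_1$ has a unique Lyapunov eigenvalue $<0$ (call the corresponding horospherical subgroup $U^-$, which is one-dimensional) and an $(n-1)$-dimensional expanding horospherical subgroup $U^+$; the centralizer of $S$ in $G$ is the diagonal torus. I would first dispose of the trivial case where $F$ is contained in the centralizer of $S$: then $SF$ is $\R$-diagonalizable up to unipotent parts in the torus, in fact $SF$ is abelian, and by standard arguments (or by reducing to the case already handled when $SF$ is epimorphic in a larger reductive group) the only thing to rule out is a measure supported on a compact orbit of the diagonal, which is homogeneous anyway. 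So assume $F$ is nontrivial and not centralized by $S$.

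The key dichotomy is whether $\mu$ is invariant under some nontrivial unipotent subgroup beyond $F$. First I would show $\mu$ has positive entropy under $a_1$: since $a_1$ normalizes $F$ and $F$ is generated by unipotents contracted or fixed by conjugation, and since $\mu$ is $F$-invariant with $F$ not centralized by $S$, the measure cannot be $a_1$-invariant and supported on a set where $U^+$ acts trivially; a short argument with the product structure of the (un)stable horospherical foliations (as in Margulis–Tomanov or the recurrence lemma of Einsiedler–Lindenstrauss) gives $h_\mu(a_1)>0$, and moreover the entropy is ``carried'' by the part of $U^+$ that is genuinely expanded. Now invoke the high-entropy / product lemma for the horospherical subgroup $U^+$: positive entropy forces $\mu$ to be invariant under a nontrivial connected unipotent subgroup $W\subseteq U^+$. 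Crucially $W$ can be taken $S$-normalized by averaging over the $a_t$-conjugates (the limit of $a_{-t}Wa_t$ as $t\to\infty$ along the right direction is a nontrivial $S$-invariant unipotent subgroup, using that $U^+$ has a clean weight-space decomposition under $S$). Then $L \df SF \cdot W$ is a subgroup strictly larger than $SF$, still normalized by $S$, and $\mu$ is $L$-invariant.

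With a nontrivial extra unipotent direction in hand, I would run Ratner's measure classification for the (possibly larger) unipotent subgroup generated by $F$ and $W$ and all their conjugates inside the group $H$ that $\mu$ ``lives on'': by Ratner's theorem $\mu$ is a homogeneous measure invariant under some closed connected $H\le G$, with $U^-$-part controlled too by the stable counterpart of the above. The remaining work is a bootstrapping/maximality argument: among all closed connected subgroups $H'$ with $SF\subseteq N_G(H')$ and $\mu$ is $H'$-invariant, take a maximal one $H$; one must show $\mu$ is actually the homogeneous measure on a single $H\Gamma$-orbit, i.e. that $\mu$ is $H$-invariant and not merely a nontrivial average of such. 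This is where the ergodicity of $SF$ and the fact that $S$ normalizes $H$ get used: $S$ acts on the space of $H$-ergodic components, and simplicity of $S$ (one contracting direction) forces this action to have a fixed point that is $\mu$ itself; equivalently, linearization of the unipotent dynamics transverse to $H$ together with the expansion properties of $a_t$ rules out escape of mass and rules out $\mu$ charging a proper subvariety, so $\mu$ is the $H$-invariant homogeneous measure.

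\emph{Main obstacle.} The hardest step is the passage from ``positive entropy'' to ``an $S$-normalized extra unipotent invariance'': ensuring that the invariance group $W$ produced by the entropy argument can be chosen inside $U^+$ \emph{and} stabilized by $S$ simultaneously, when a priori $W$ could be skew relative to the $S$-weight decomposition and could interact with $F$ in complicated ways. Handling this requires a careful analysis of the adjoint action of $S$ on $\mathfrak{u}^+$, a limiting argument for the invariance groups under the $a_t$-flow, and an entropy-contribution bookkeeping (à la Einsiedler–Katok–Lindenstrauss) showing no entropy ``leaks'' into directions not normalized by $S$. The epimorphic-subgroup results of Margulis–Tomanov and Mozes then finish the classification once the invariance group has been enlarged enough to be epimorphic in a reductive group, which is the role the simplicity hypothesis ultimately plays.
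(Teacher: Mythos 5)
Your proposal takes a fundamentally different route from the paper, and unfortunately the route has both local errors and a structural gap.

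\emph{Local errors.} You assert that ``the centralizer of $S$ in $G$ is the diagonal torus.'' This is false in general: if two of the exponents $r_1,\dots,r_{n-1}$ coincide, the centralizer contains a nonabelian $\GL_k$ block and in particular contains unipotents, so $F$ centralized by $S$ does not force $SF$ to be abelian, and your dispatching of that case does not stand. More seriously, your ``positive entropy $\Rightarrow$ extra unipotent invariance $W\subseteq U^+$'' step is circular as stated. We already know $\mu$ is $F$-invariant, and since $\mathfrak{f}$ is $\Ad(S)$-stable it decomposes into $S$-weight spaces; the reason $h_\mu(a_1)>0$ in your argument is precisely that $\mathfrak{f}\cap\mathfrak{u}^{\pm}\neq 0$. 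So the unipotent invariance your entropy step produces is a priori nothing beyond what $F$ already supplies, and $L=SF\cdot W$ need not be strictly larger than $SF$. To actually gain new invariance one needs either the full low-entropy machinery or, as the paper does, a different mechanism altogether. Finally, your closing step --- ``the epimorphic-subgroup results of Margulis--Tomanov and Mozes then finish the classification once the invariance group has been enlarged enough to be epimorphic in a reductive group'' --- contradicts the explicit point of the paper: the introduction emphasizes that $SF$ need \emph{not} be epimorphic in any intermediate subgroup, that this already fails for $n=4$, and that this is exactly why a new idea is required. There is no reason the enlargement process you describe should land in the epimorphic regime.

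\emph{What the paper actually does.} The paper never invokes entropy. It starts from the ergodic decomposition of $\mu$ into $F$-invariant ergodic components $\mu_x$, which are homogeneous by Ratner, and uses Margulis--Tomanov/Mozes to find a single connected $H$ so that a.e.\ $\mu_x$ is the $H$-invariant measure on a closed orbit $Hx$, with $S\le N_G^1(H)^\circ$. By Ratner/Borel--Prasad/Tomanov one may conjugate so that $H$ is virtually defined over $\Q$ with semisimple Levi factor. Then Poincar\'e recurrence for the $a_t$-flow combined with Mahler's compactness criterion (Lemma~\ref{lem;observe}) rules out a nontrivial unipotent radical $\mathscr R_u(H)$ --- its fixed vectors would give a proper $S$-invariant $\Q$-rational subspace forcing one-sided divergence --- so $H$ is semisimple, and the same lemma forces the $H$-action on $\R^n$ to have only one $\Q$-isotropic type. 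The key new ingredient, Proposition~\ref{lem;basic}, then uses Tits' classification of $\Q$-irreducible representations together with the simplicity of $S$ (the single multiplicity-one contracting eigenvalue) to show $Z_G(H)$ is abelian. Since $N_G(H)$ and $HZ_G(H)$ have the same identity component, one concludes $\mu$ lives on a closed $Z_G(H)H$-orbit, and a second application of Ratner (now to $SF$ viewed inside $Z_G(H)H$, where the extra directions are Ad-unipotent) gives homogeneity. None of this is recoverable from the entropy/linearization framework you sketch, and the simplicity hypothesis is used in two sharp places --- Lemma~\ref{lem;observe} and the Tits computation --- rather than via any epimorphicity.
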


The starting point of the  proof of Theorem \ref{thm;rigidity}  is the same as that of \cite{mt96} and \cite{m95}
where $\mu$ is written as a convex comination of $F$ invariant and ergodic probability measures.
Note that any  $F$ invariant and ergodic probability measure $\nu$ on $X$ is homogeneous by Ratner's theorem, i.e.~there 
is a closed subgroup $H$ of $G$ and $x_g\in X $ (here $x_g$ is the coset $g\Gamma$ for $g\in G$) such that $\nu$ is supported on $H x_g$ and is $H$ invariant. 
 We show that  $a_t \nu$  converges to zero in the space of finite measures on $X$ as $t$ tends to infinity or minus infinity unless $g^{-1}H g$ is a semisimple algebraic group virtually  defined over $\Q$ and  its action on $\R^n$ has
only one $\Q$-isotropic type. 
 The main new ingredient here  is the  use of  Tits \cite{t71} on  the structure 
of irreducible  representations to show that the  centralizer of $H$ in $G$ (denoted by $Z_G(H)$) is an abelian group. The proof is then completed by a standard argument.

\subsubsection*{Acknowledgements}
The authors would like to thank MSRI 
 for its hospitality during Spring 2015.

\section{Irreducible representations over $\Q$}

This section is about  the structure of  $\Q$-rational representations of semisimple algebraic groups defined over $\Q$. 
We adopt the convention of \cite{b} to identify algebraic varieties or groups  with their rational points over complex 
numbers and denote  them by boldfaced capital  letters.

Let $\bV$  be an $n$-dimensional vector space over $\C $ endowed with a $\Q$-structure. 
Let $\bG=\SL (\bV)$ be the special linear group with the natural $\Q$-structure 
such that  linear action of $\bG $ on $\bV$ is a $\Q$-rational representation. 
Let 
 $\bH$ be a nontrivial  connected   semisimple  subgroup of $\bG$ defined over  $\Q$. 
 Recall that   the  linear action of $\bH$ on $\bV$ is completely determined by the 
 induced    Lie algebra representations which   is completely reducible over $\Q$ (see \cite[Theorem 7.8.11]{weible}).
So $\bV$ is a direct sum of $\bH$ invariant  $\Q$-irreducible subspaces. 
We say the linear action of $\bH$ on $\bV$ has only one $\Q$-isotropic type if all these irreducible subspaces 
are isomorphic to each other over $\Q$.

 For a field $\mathbb F$ containing $\Q$ and a variety $\mathbf X$ defined over $\Q$, we let 
 $\mathbf X_{\mathbb F}$ be the set of $\mathbb F$-rational points of $\mathbf X$. 
 Let $V=\bV_\R$, $G=\bG_\R$ and $H=\bH_\R$. 
 The main result of this section is the following proposition.
 \begin{prop}
 	\label{lem;basic}
 	Let $S$  be a simple  one parameter $\R$-diagonalizable
 	subgroup of $G$. Suppose 
 $S$ is contained in $HZ_G(H)$ 
 		and  the linear action of $\bH$ on $\bV$ has only one $\Q$-isotropic type.
Then $Z_G(H)$ is an abelian  group. 
 \end{prop}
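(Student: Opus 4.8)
The plan is to analyze the decomposition of $\bV$ into $\Q$-irreducible $\bH$-subspaces and use the "one $\Q$-isotropic type" hypothesis together with the simplicity of $S$ to pin down the structure of $Z_G(H)$. Write $\bV = \bW_1 \oplus \cdots \oplus \bW_m$ as a sum of $\Q$-irreducible $\bH$-invariant subspaces, all $\Q$-isomorphic to a single $\Q$-irreducible $\bH$-module $\bW$. Let $\bD = \operatorname{End}_{\bH}(\bW)$ be the commutant; by Schur's lemma over $\Q$ this is a finite-dimensional division algebra over $\Q$. Then the full $\bH$-equivariant endomorphism algebra of $\bV$ is a matrix algebra $M_m(\bD)$, and its group of units (intersected with $\bG = \SL(\bV)$) is essentially $\GL_m(\bD)$ up to the determinant-one condition and scalars. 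The key observation is that $Z_{\bG}(\bH)$, the centralizer of $\bH$ in $\SL(\bV)$, sits inside this equivariant endomorphism algebra, so $Z_G(H) = Z_{\bG}(\bH)_\R$ is (a real form of) a group closely related to $\GL_m(\bD)$.

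The next step is to bring in $S$. Since $S$ is simple, by definition (after conjugation) $a_t$ acts on $\R^n$ with eigenvalues $e^{r_1 t}, \ldots, e^{r_{n-1}t}, e^{-t}$ where the $r_i$ are positive and sum to $1$; in particular $a_t$ has a one-dimensional eigenspace for its smallest eigenvalue and all other eigenvalues are distinct-signed in the sense that exactly one is less than one. I would use the hypothesis $S \subseteq H Z_G(H)$: write $a_t = h_t z_t$ with $h_t \in H$, $z_t \in Z_G(H)$ (this can be arranged as a one-parameter subgroup after passing to the relevant almost-direct-product structure, since $H \cap Z_G(H)$ is central in $H$). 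The $z_t$-component must "see" the multiplicity structure: the eigenvalue $e^{-t}$ of multiplicity one forces the $Z_G(H)$-part to act on the $m$ copies $\bW_1, \ldots, \bW_m$ in a way compatible with singling out a one-dimensional line. Concretely, the one-dimensional bottom eigenspace of $a_t$ lies in a single $\bW_i \otimes (\text{line in } \bD\text{-module})$, and the simplicity of $S$ is what prevents two different $\bW_i$'s from being permuted or mixed nontrivially by the semisimple part. I expect this to force $m = 1$, or more precisely to force the image of $Z_G(H)$ in $\GL_m(\bD)$ to be contained in a torus-like abelian subgroup.

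Once one knows the relevant picture is $m=1$ (or the $\GL_m(\bD)$-component is abelian), it remains to show $\bD^\times \cap \SL$ — the norm-one elements of the division algebra $\bD$ acting by $\bD$-scalars on $\bW$ — is abelian. This is automatic if $\bD$ is commutative (a number field), so the crux is to rule out noncommutative $\bD$, or to show that even for noncommutative $\bD$ the part that actually lands in $Z_G(H)$ and is compatible with $S$ simple is abelian. Here is where I would invoke Tits \cite{t71} on the structure of irreducible representations of semisimple groups: the commutant division algebra $\bD$ of a $\Q$-irreducible representation is constrained, and the presence of a simple $\R$-diagonalizable one-parameter subgroup $S \subseteq HZ_G(H)$ (whose $\R$-eigenvalue pattern has a multiplicity-one bottom weight) forces $\bD$ to split over $\R$ in a controlled way, or forces $\dim_\Q \bD = 1$. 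I anticipate that the main obstacle is precisely this last point: ruling out a genuinely noncommutative (e.g. quaternionic) commutant, which is exactly why the paper cites Tits on irreducible representations rather than citing only elementary Schur-theory. The argument should conclude: $Z_G(H)$ is, up to finite index and center, the norm-one group of a commutative algebra, hence abelian. $\qed$
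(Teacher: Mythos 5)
Your high-level strategy is the same as the paper's: decompose $\bV$ into $m$ copies of a $\Q$-irreducible $\bH$-module $\bW$, identify the Schur commutant $\bD = \operatorname{End}_{\bH}(\bW)$ as a division algebra, recognize that $Z_{\bG}(\bH)$ lives in the units of $M_m(\bD)$, and use the simplicity of $S$ (the multiplicity-one smallest eigenvalue) to force $m=1$ and $\bD$ commutative. You also correctly locate where Tits enters. But the proposal stops at precisely the point where the work happens: the phrases ``I expect this to force $m=1$'' and ``I anticipate that the main obstacle is \ldots\ ruling out a genuinely noncommutative commutant'' are placeholders for the argument, not the argument itself.

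Here is the missing content. Let $\F$ be the center of $\bD$ (a number field), so $\bD$ is central simple of degree $d$ over $\F$, and let $k$ be the $\bD$-rank of $\bW$; the goal is $m=d=1$. After base-changing along the archimedean places $\sigma_i$ of $\F$, one gets $V=\prod_i V_i$ and $H=\prod_i H_i$, and each $V_i$ factors as $V_i' \otimes V_i''$ with $H_i$ acting on the first factor and $Z_i = Z_{\GL_\R(V_i)}(H_i)$ on the second. The three types of places (real split, where $\D\otimes_{\F,\sigma_i}\R \cong M_d(\R)$; real quaternionic, where it is $M_{d/2}(\mathbb H)$; and complex) behave differently. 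Writing $a=hb$ with $h=\prod h_i\otimes\mathrm I$, $b=\prod\mathrm I\otimes b_i$, the multiplicity-one eigenvalue $\lambda<1$ of $a$ must occur in some $V_{i_0}$; but at quaternionic and complex places every eigenvalue of $a|_{V_i}$ has multiplicity $\ge 4$ (resp.\ $\ge 2$) because $h_i$ commutes with an $\mathbb H$- (resp.\ $\C$-) action, so $i_0$ is a real split place. Since each $h_i$ lies in a semisimple group, $\det h_i=1$; the other eigenvalues of $a$ being $>1$ forces $\det b_i>1$ for $i\ne i_0$, and $\det a=1$ then forces $0<\det b_{i_0}\le 1$. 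Combined with the multiplicity-one condition at $i_0$, this yields $\min\{kd,md\}=1$; as $\bH$ is nontrivial semisimple one has $kd>1$, so $md=1$, hence $m=d=1$ and all places are real split, making $Z_G(H)$ abelian. Your proposal does not show why the multiplicity-one eigenvector cannot sit over a complex or quaternionic place (which is what rules out $d>1$), nor does it carry out the determinant bookkeeping that forces $m=1$; both are essential and must be supplied.
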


\begin{proof}

Since the linear action of $\bH$ on $\bV$ has only one $\Q$-isotropic type, 
there
exists  a positive integer $m$ such that 
$\bV$ is isomorphic to  the direct sum of $m$ copies of a certain $\Q$-irreducible  representation of $\bH$. 
According to \cite[Theorem 7.2]{t71}, there exists a central simple  division algebra $\D$ over a number field $\F$,
a complex   vector space  $\bW$  defined over  $\F$ with $\bW_\F$ having  a right $\D$-module structure and 
an absolutely
irreducible $(\D, \F)$-representation   $\rho: \bH\to \GL(\bW)$ such that 
\begin{align}\label{eq;tits}
\bV\cong \mathrm{Res}_{\F/\Q} \big(\mathrm{Res}_{\D/\F} \bW^{\oplus m}\big)
\end{align}
 in the category of $\Q$-rational representations of $\bH$. 
According to Tits' notation, the morphism $\rho$ is defined over $\F$
and 
$\rho(\bH_\F)$ acts $\D$-linear on $\bW_\F$. 
The notion of  absolute irreducibility will be explained later. 

Let  $d^2=[\D: \F]$ and  $k=[\bW_
\F:\D]$. 
We will  show that $m=d=1$, which will imply the proposition immediately. 
We would like to understand the group    $H$ and its centralizer
by choosing a good basis of $V$ using   (\ref{eq;tits}). 	
	To make the presentation simpler we assume the left and right hand sides of (\ref{eq;tits}) are equal. 
We will give explicit descriptions  of $\D, \bW$  and of restriction of scalars using several  isomorphisms. 

By choosing an isomorphism  $\D\otimes_\F \C\cong \mathrm{M}_d(\C) $  and identifying $\D$ with $\D\otimes 1$ in the natural 
way we assume that $\D$ is a  subring of $\mathrm{M}_d(\C)$.  
 We assume $\bW_\F= \D^k$ and $\bW= \mathrm{M}_d(\C)^k$ 
where $\mathrm{M}_d(\C)^k$ is naturally identified with the set of  $kd$-by-$d$ matrices  by viewing its elements blockwise. 
Therefore, the group  
of right $\mathrm{ M}_d(\C)$-module automorphisms of $\bW$
can be naturally identified with   $\GL_{kd}(\C) $ which acts on $\bW$ from the left. Let $\widehat{\rho}$ be the composite of $\rho$ and this identification, that is, 
$\widehat{\rho}(\bH)$ is a subgroup of $\GL_{kd}(\C) $. We consider  $\widehat\rho$ as  a
 morphism of algebraic groups defined over $\F$ with 
\[
\widehat\rho(\bH_\F)\le \GL_k(\D). 
\]
We note that  the  absolute irreducibility of  $\rho$ refers to the fact  that
$\widehat\rho(\bH)$ acts irreducibly on   $\C^{kd}$.

Let  $\sigma_1, \ldots, \sigma_r $ be the  real embeddings of $\F$ and 
$\sigma_{r+1}, \sigma_{r+1}'~, \ldots, \sigma_{r+s}, \sigma_{r+s}'$ be the  pairs of complex embeddings of $\F$. 
Each $\sigma_i$ defines a functor from the category of varieties defined over $\F$ to the category of  varieties defined over 
$\F^{\sigma_i}\df\{\sigma_i(x): x\in \F  \}$ (see \cite[\S 0.14.1]{b} for definitions).
For  a variety $\mathbf X$ defined over $\F$ we let $\mathbf X^{\sigma_i}$ be the image of this functor. 
Also let $\mathbb D^{\sigma_i}=\mathbb D\otimes _{\mathbb F}\mathbb F^{\sigma_i}$ where the left $\mathbb F$-module structure of
  $\mathbb F^{\sigma_i}$ comes from the left multiplication of $\sigma_i(\cdot)$.

According to  (\ref{eq;tits}) and \cite[Theorem 1.3.2]{weil} we have 
\[
V=\prod_{i=1}^{r+s} V_i\quad \mbox{where}\quad
V_i=\left\{ 
\begin{array}{ll}
\big ({\bW}^{\sigma_i}_{\R}\big )^{\oplus m}  & 1\le i\le r\\
\big ({\bW}^{\sigma_i}\big )^{\oplus m}&  r+1\le i\le r+s
\end{array}
\right .
\]
and 
\begin{align}\label{eq;basicH}
H=\prod_{i=1}^{r+s} H_i\quad \mbox{where}\quad
H_i=\left\{ 
\begin{array}{ll}
\rho^{\oplus m}(\bH)^{\sigma_i}_\R & 1\le i\le r\\
\rho^{\oplus m}(\bH)^{\sigma_i}& r+1\le i\le r+s.
\end{array}
\right .
\end{align}
Moreover, 
\[
Z_G(H)=G\cap \prod_{i=1}^{r+s}Z_i,
\]
where 
 $Z_i$ is the centralizer of $H_i$ in $\GL_\R (V_i)$. It suffices to prove that all the  $Z_i$  are abelian groups.

For $r+1\le i\le r+s$, 
the complex part of the above decomposition is easy to understand.  
Since $\mathbb D^{\sigma_i}\otimes _{\mathbb F^{\sigma_i}}\C=\mathrm{M}_d(\C)$,
we have a decomposition  
 \[ 
 V_i=
V_i'\otimes_\C V_i
 ''\] where   $V_i' = \C^{kd}$ and $V_i''=  \C^{md}$
  such that  the group  
\begin{align}\label{eq;hic}
H_i=\widehat\rho(\bH)^{\sigma_i}\otimes \mathrm{I}_{V_i''}
\end{align}
where $ \mathrm{I}_{V_i''}$ denotes the identity map  of $V_i''$. 

For the real part we first note that 
the central simple algebra  $\D^{\sigma_i}\otimes_{{\F}^{\sigma_i}} \R $ is equal  to $\mathrm{M}_d(\R) $ or 
$\mathrm{M}_{d/2}(\mathbb H) $ where in the latter case $\mathbb H $ is the  Hamilton quaternions and  $d$ has to be  an odd number. 
By choosing an embedding $\mathbb H \to \mathrm{M}_2(\C)$ we assume that $\mathrm{M}_{d/2}(\mathbb H)$ is 
a subring of $\mathrm{M}_{d}(\C)$.   Suppose for $1\le i\le l$ we are in the case of $\mathrm{M}_d(\R)$ and for $l+1\le i\le r$ we are in the case
of $\mathrm{M}_{d/2}(\mathbb H) $. 
Then 
\begin{align}\label{eq;hir}
V_i=V_i'\otimes_\R V_i'' \quad\mbox{and}\quad
H_i =\widehat\rho(\bH)^{\sigma_i}_\R\otimes \mathrm{I}_{V_i''}
\end{align}
where
\begin{align*}
V_i'& =\R^{kd} \mbox{ and } V_i''= \R^{md} \quad \mbox{ for }1\le i\le l;\\
V_i'& =\mathbb H^{kd/2} \mbox{ and }   V_i''= \R^{md/2} \quad \mbox{ for }l+1\le i\le r.
\end{align*}

{
For $1\le i\le r$ (resp.~$r+1\le i\le r+s$) let   $R_i $   be the ring of $\R$-linear spans of $\widehat \rho(\bH)^{\sigma_i}_\R$ (resp.~$\widehat \rho(\bH)^{\sigma_i}$) in the ring of $\R$-linear transformations of $V_i'$. }
 Since $\widehat\rho(\bH)$ acts irreducibly on $\C^{kd}$, the space  $V_i'$ is an irreducible $R_i$-module. 
 For every $ i$ there is a division algebra $D_i$ such that the opposite division algebra $D_i^\circ$
is isomorphic to  set of $R_i$-module morphisms of $V_i'$.
By Wedderburn-Artin theorem (see e.g.~\cite[\S 1]{farb}),
 the ring  $R_i$ is isomorphic to 
a matrix ring over $D_i$  and  $V_i$ is isomorphic to a free $D_i$ module of finite rank. 
Since $\widehat\rho(\bH)$ acts irreducibly on $\C^{kd}$, we have 
\begin{equation}\label{eq;di}
D_i=\left  \{
\begin{array}{ll}
\R & 1\le i\le l \\
 \mathbb H &  l+1\le i\le r\\
\C & r+1\le i\le r+s.
\end{array}
 \right . 
\end{equation}

We can understand $Z_i$ using 
 (\ref{eq;di}), (\ref{eq;hic}) and (\ref{eq;hir}). 
For $r+1\le i\le r+s$, 
\[
Z_i= (\C \mathrm{I}_{V_i'}\otimes_\C \mbox{Hom}_\C(V_i'', V_i''))^*=\mathrm{I}_{V_i'}\otimes \GL_\C (V_i'')
\]
where $(\cdot)^*$ denotes the invertible elements of a ring. 
Moreover, if $c_i\in Z_i$ is diagonalizable over $\R$, then we have a decomposition  
$c_i=\mathrm{I}_{V_i'}\otimes b_i$ where $b_i$ is an $\R$-diagonal element of  $\GL_\C(V_i'')$
(here we view $\GL_\C (V_i'')$ as the real points of  an algebraic group coming from  the    restriction of scalars).
  For  $1\le i\le l$ 
\[
Z_i=\big(\R \mathrm{I}_{V_i'}\otimes_\R  \mbox{Hom}_\R (V_i'', V_i'')\big)^*=\mathrm{I}_{V_i'}\otimes \GL_\R(V_i''),
\] 
and  every $\R$-diagonalizable    $c_i\in Z_i$  can be decomposed as 
\[
\mathrm{I}_{V_i'}\otimes b_i
\]
where $b_i\in \GL_\R(V_i'')$ is  $\R$-diagonalizable. 
 For $l+1\le i\le r$  
\[
Z_i=\big(\mathbb H \mathrm{I}_{V_i'} \otimes _\R \mbox{Hom}_\R(V_i'')\big)^*\cong   \GL_{md/2}(\mathbb H \mathrm{I}_{V_i'}),
\]
where $\mathbb H \mathrm{I}_{V_i'}$ acts on $V_i'$ via right multiplications of matrices. 
Suppose    $c_i\in Z_i$ is $\R$-diagonalizable, then there exists $g_i\in  Z_i$   such that 
\[
g_ic_ig^{-1}_i=\mathrm{I}_{V_i'}\otimes b_i
\]
where $b_i\in \GL_\R (V_i'')$ is  $\R$-diagonalizable.

Now we turn to the proof of the conclusion. 
Since $S$ is contained in $HZ_G(H)$, the group $S$ is the direct product 
$S_1S_2$ of one parameter  $\R$-diagonalizable subgroups $S_1\le H$ and 
$S_2\le Z_G(H)$. 
Since $S$ is simple, there exists $a\in S$ such that the linear action of 
$a$ on $\R^n$ has only one eigenvalue $\lambda<1$ counted with multiplicity and all the other eigenvalues are bigger than $1$.  
We write $a=hb$ where $h\in S_1$ and $b\in S_2$.
 According to  the  discussions previously, by possibly replacing $
 S $ by $gSg^{-1}$ for some $g\in \prod_{i=1}^{r+s}Z_i$  we have
 \begin{align}\label{eq;basic1}
h=\prod_{i=1}^{r+s}h_i\otimes \mathrm {I}_{V_i''} \quad \mbox{and}\quad  b= \prod_{i=1}^{r+s} \mathrm{I}_{V_i'}\otimes b_i
 \end{align}
 where  all the $h_i$ and   $b_i$ are 
 $\R$-diagonalizable elements  of  $\GL_\R(V_i')$ and  $\GL_\R (V_i'')$ respectively.

 There exists $i_0$ with $1\le i_0\le r+s$ such that 
 $\lambda$ is an eigenvalue of $a$ restricted to  $V_{i_0}$. 
Since   each eigenvalue of $a$ restricted to  $V_i$
has multiplicity at least  $4$ (resp.~$2$)      for $l<i\le r$ (resp.~$r+1\le i\le r+s$), we have $i_0\le l$.  
We assume without loss of generality that $i_0=1$.   
Since $\det h_i=1$ for all $i$,  $\lambda$ has multiplicity $1$, all the other eigenvalues 
of $a$ are bigger than $1$ and (\ref{eq;basic1}), we have $\det b_i>1$ for $i>1$. Since $\det a=1$ we have   $0<\det b_1\leq 1$ ({with equality
in case~$\mathbb{F}=\mathbb{Q}$}).
This and the  simplicity of $\lambda $ implies that $\min\{kd,md\}=1$.  Therefore   $d=1$.   
Since $\bH$ is a semisimple algebraic group, we have $k> 1$ and hence $m=1$. 
Using previously discussed structures of $Z_G(H)$ and $m=d=1$, we have $l=r $ and $Z_G(H)$ is an abelian group.

\end{proof}

\section{measure rigidity}

 The aim of this section is to prove Theorem \ref{thm;rigidity}.
 Let $G, \Gamma, X, S, F$ be as in Theorem \ref{thm;rigidity}.
 For every $g\in G$ we use 
 $x_g$  to denote the coset $g\Gamma\in X$.
Also, we use $x_\Gamma$ to denote the coset  $\Gamma\in X$. 
We endow $G$ and    $\R^n$  with    $\Q$-structures such that their  $\Q$-rational points are $\SL_n(\Q)$ and  $\Q^n$
respectively.  A subgroup $H$  of $G$ is said to be virtually  defined over $\Q$ if it has finite index in the group of  real points of 
a $\Q$-rational algebraic group.
The following is a version of Ratner's theorem \cite{r91}, see also
the work \cite[Prop.~1.1]{Borel-Prasad} of Borel-Prasad
and \cite[Theorem 2]{t00} of Tomanov.  
 \begin{thm}\label{lem;structure}
  Let $\nu$ be an $F$ invariant and ergodic probability measure on $X$. 
  Then there exists a connected subgroup $H$
 of $G$ virtually  defined over $\Q$  with semisimple Levi factor
 and a point $x_g\in X$ such that
 the measure $\nu$ is  invariant under the action of $gH g^{-1}\ge F$   and
 $\nu$ is concentrated on  the closed subset 
  $gH x_\Gamma$.    
 \end{thm}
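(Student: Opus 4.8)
The plan is to obtain Theorem \ref{lem;structure} by combining Ratner's measure classification theorem \cite{r91} with the arithmeticity refinements of Borel--Prasad \cite[Prop.~1.1]{Borel-Prasad} and Tomanov \cite[Theorem 2]{t00}, together with a short amount of bookkeeping to arrange connectedness and the form of the representative. First I would apply Ratner's theorem to the $F$-action: since $F$ is generated by one-parameter unipotent subgroups and $\nu$ is $F$-invariant and ergodic, $\nu$ is homogeneous, i.e.\ there is a closed subgroup $L\le G$ with $F\subseteq L$, a point $y=g\Gamma$ with $Ly$ closed and of finite $L$-invariant volume, and $\nu$ equal to the $L$-invariant probability measure on $Ly$. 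In particular $L\cap g\Gamma g^{-1}$ is a lattice in $L$.

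Next I would pass to a connected group. Because $F$ is generated by connected subgroups through the identity it is itself connected, so $F\subseteq L^\circ$; since $L^\circ$ is open in $L$, each $L^\circ$-orbit inside $Ly$ is clopen in $Ly$, hence closed in $X$ and of finite $L^\circ$-invariant volume, and by $F$-ergodicity $\nu$ is concentrated on exactly one of them, say $L^\circ\ell_0 y$ with $\ell_0\in L$. Setting $g_1=\ell_0 g$ and $H=g_1^{-1}L^\circ g_1$, the measure $\nu$ is concentrated on the closed set $g_1Hx_\Gamma$, is invariant under $g_1Hg_1^{-1}=L^\circ\supseteq F$, and $H\cap\Gamma$ is a lattice in the connected group $H$. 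Since a closed finite-volume orbit cannot carry a nontrivial rational character, $H$ is unimodular with unipotent radical, so its Levi factor is semisimple; this also follows from (and is part of) the structure theory invoked in the next step.

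Finally, the genuinely substantial point is that $H$ may be taken virtually defined over $\Q$, and here I would simply invoke \cite[Prop.~1.1]{Borel-Prasad} and \cite[Theorem 2]{t00}: since $g_1Hx_\Gamma$ is a closed, finite-volume orbit and $F\subseteq g_1Hg_1^{-1}$ is generated by unipotents acting ergodically, one can choose the coset representative $g=g_1\gamma$ (for a suitable $\gamma\in\Gamma$) so that $H\cap\Gamma$ lies in $\SL_n(\Q)$; by the Borel density theorem applied to the semisimple Levi factor and Zariski density of lattices in the unipotent radical (using that $H$ has no nontrivial $\Q$-characters), $H\cap\Gamma$ is Zariski dense in $H$, so $H$ is commensurable with the real points of the $\Q$-algebraic group that is the Zariski closure of $H\cap\Gamma$ — that is, $H$ is virtually defined over $\Q$. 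The main obstacle is precisely this last step: upgrading the abstract homogeneity furnished by Ratner's theorem to the rational structure rests on the Dani--Margulis linearization/recurrence machinery underlying \cite{Borel-Prasad,t00}, which in a self-contained treatment would carry essentially all of the work; in the present paper it is used as a black box.
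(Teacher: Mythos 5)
Your proposal is correct and follows the same route as the paper, which states this theorem without a separate proof, attributing it to Ratner \cite{r91} together with the arithmeticity refinements of Borel--Prasad \cite{Borel-Prasad} and Tomanov \cite{t00}; your connected-component bookkeeping is a reasonable way to make the reduction explicit. One small caveat: absence of a nontrivial $\Q$-character does not by itself force the Levi factor to be semisimple (a $\Q$-anisotropic central torus would be a counterexample, and ruling it out requires using $F$-ergodicity rather than finite volume alone), but as you yourself flag, this point is supplied by the cited structure theory rather than by that one-line heuristic.
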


 \begin{lem}\label{lem;observe}
Suppose   there is a proper $\Q$-rational subspace $V$ of $\R^n$ invariant under $SH$ where 
$H$ is a subgroup of $G$ normalized by $S$ . 
 Then either $\{a_thx_\Gamma: t\ge 0\}$ is divergent  for every $h\in H$ or 
  $\{a_thx_\Gamma: t\le 0\}$ is divergent  for every $h\in H$.
  \end{lem}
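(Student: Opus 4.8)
The plan is to track how the fixed rational subspace $V$ meets the moving unimodular lattice $a_t h\Z^n$ and to read off divergence from Mahler's compactness criterion. The key observation is that $V$ is invariant not merely under $S$ but under all of $SH$, so it does not move at all along the orbit: for every $t\in\R$ and $h\in H$ one has $(a_t h)^{-1}V=h^{-1}a_{-t}V=V$. Hence $V\cap a_t h\Z^n=a_t h\,(V\cap\Z^n)$, and since $V$ is $\Q$-rational, $V\cap\Z^n$ is a lattice in $V$ of some fixed covolume $c_0>0$ (with respect to a fixed Euclidean structure on $\R^n$). Therefore $V\cap a_t h\Z^n$ is a lattice in $V$ whose covolume equals $\abs{\det(a_t h|_V)}\,c_0=\abs{\det(h|_V)}\,e^{\sigma t}\,c_0$, where I set $e^{\sigma}\df\abs{\det(a_1|_V)}$.

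Next I would pin down the sign of $\sigma$, which is the crux of the uniformity over $h$. Since $S$ is simple it is $\R$-diagonalizable with eigenvalues $e^{r_1t},\dots,e^{r_{n-1}t},e^{-t}$ where $r_i>0$ and $\sum_i r_i=1$; as $V$ is $S$-invariant, it is spanned by a subset of the corresponding eigenbasis, so $\sigma$ equals either $\sum_{i\in J}r_i$ (if the $e^{-t}$-eigenline is not contained in $V$) or $\sum_{i\in J}r_i-1$ (if it is), for some $J\subseteq\{1,\dots,n-1\}$. Using $0<\dim V<n$ (we may assume $V\neq\{0\}$) together with positivity of the $r_i$, one checks that $\sigma>0$ in the first case and $\sigma<0$ in the second; in particular $\sigma\neq0$, and its sign depends only on $V$ and $S$, not on $h$.

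It then remains to convert covolume decay into divergence. Suppose $\sigma<0$ (the case $\sigma>0$ is identical after replacing $t$ by $-t$, and yields the other alternative). Fix $h\in H$. Then $\mathrm{covol}(V\cap a_t h\Z^n)=\abs{\det(h|_V)}\,c_0\,e^{\sigma t}\to0$ as $t\to+\infty$, so by Minkowski's convex body theorem the lattice $V\cap a_t h\Z^n$, and hence $a_t h\Z^n$, contains a nonzero vector of Euclidean norm at most $C_n\,\mathrm{covol}(V\cap a_t h\Z^n)^{1/\dim V}$, which tends to $0$. Since $a_t h\in\SL_n(\R)$, Mahler's compactness criterion shows that $a_t h x_\Gamma$ eventually leaves every compact subset of $X$, i.e.~$\{a_t h x_\Gamma:t\ge0\}$ is divergent; as $h$ was arbitrary, this is the first alternative of the lemma, and $\sigma>0$ gives the second.

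I do not anticipate a genuine obstacle here. The only point requiring care is that the exponential rate $\sigma$ is the same for all $h\in H$ — this is exactly what forces the dichotomy to be uniform over $H$ rather than $h$-by-$h$ with varying sides — and it is guaranteed precisely by the hypothesis that $V$ is $SH$-invariant (not merely $S$-invariant), which keeps $V$ literally fixed along every trajectory $a_t h\Z^n$.
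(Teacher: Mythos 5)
Your proof is correct and essentially identical to the paper's: the paper observes that the wedge $w=v_1\wedge\cdots\wedge v_k$ of a rational basis of $V$ is a common eigenvector of $S$ (with eigenvalue $e^{\lambda t}$, $\lambda\neq 0$ by simplicity) and of each $h\in H$, so $a_t h w\to 0$ in one time direction uniformly in $h$, and then invokes the wedge-vector form of Mahler's criterion from \cite[Prop.~3.1]{w04}. Your covolume of $a_t h(V\cap\Z^n)$ is precisely $\lVert a_t h w\rVert$ up to a constant and your $\sigma$ is the paper's $\lambda$, so the Minkowski-plus-Mahler step is just a rephrasing of the same argument.
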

  \begin{proof}
 By assumption, there exists an integer $k$ with  $1\le k\le n-1$ and 
 a basis $v_1, \ldots, v_k$ of $V$  consisting of rational vectors. 
Since the   space $V$ is $SH$ invariant, the vector
 $w=v_1\wedge\cdots \wedge v_k\in \wedge^{k}\Q^n$ is an eigenvector of $S$.
By assumption the one parameter group  $S$ is simple, so   there exists a nonzero real number $\lambda$
 such that $a_t w=e^{\lambda t}w$.   Therefore
 $a_t w\to 0$ as $t\to \infty$ or $t\to -\infty$. We assume without loss of generality 
 that $a_t  w\to 0$ as $t\to\infty$. 
 Note that for any $h\in H$ one has
 $h w=s w$ for some nonzero real number  $s$,  which implies that 
 $
 a_thw\to 0 
 $ as 
 $t\to \infty$. 
   It follows from the Mahler's compactness criterion (see e.g.~\cite[Propositon 3.1]{w04}) that the trajectory $\{a_thx_\Gamma: t\ge 0\}$ is 
   divergent for every $h\in H$. 
\end{proof}
\begin{proof}[Proof of Theorem \ref{thm;rigidity}]

Let $\mu$ be an $SF$ invariant and ergodic probability measure on $X$. 
It follows from Poincare recurrence that for almost every $x\in X$
\begin{enumerate}
\item 	 Neither of the trajectories  $\{a_tx: t\ge 0\}$  or $\{a_tx: t\le 0\}$ is divergent. 	
\end{enumerate}

Let 
\[
\mu=\int_X\mu_x\dd\mu(x)
\]
be the ergodic decomposition of $\mu$ into $F$ invariant and  ergodic components. 
According to  \cite{mt96} and \cite{m95} we can find a connected
subgroup $H$ of $G$ and   a measurable subset $X'$ of $X$ with full $\mu$ measure such that for every
$x\in X'$ the following properties  hold:
\begin{enumerate}
\item[(2)]  $\mu_x$ is  $H$ invariant and supported on the closed subset $Hx$. 
\item[(3)]  $S\le N_G^1(H)^{\circ}$  where $N_G^1(H)$ consists of    $g\in N_G(H)$ with the property that 
the conjugation by $g$ preserves the Haar measure of $H$. Moreover, $\mu(N_G^1(H)^\circ x)=1$. 
\end{enumerate}
By possibly removing a  measure zero  subset from $X'$, we assume  that (1) holds for all $x\in X'$. 

According to (2) and  Theorem \ref{lem;structure} there is a connected   subgroup $H'$ of $G$ virtually  defined over $\Q$
with semisimple Levi factor  and $g\in G$ with $x_g\in X'$  such that 
$H=g H' g^{-1}$.  By replacing $S$ by $g^{-1}Sg$, $F$ by $g^{-1}Fg$
and $\mu $ by $g^{-1}\mu$ we can without loss of generality assume that $H=H'$  and $x_\Gamma\in X'$.

Let $\mathscr R_u(H)$ be the maximal  unipotent normal subgroup of $H$. Then $H$ has  a connected semisimple subgroup 
$H_0$ virtually defined over $\Q$ such that $H=H_0\ltimes \mathscr R_u(H)$.
Note that 
 $\Gamma\cap H_0$ and $\Gamma \cap \mathscr R_u(H)$ are lattices in  $H_0$
 and $ \mathscr R_u(H)$ respectively.  
 As a connected unipotent group, the space  of 
$\mathscr R_u(H)$ invariant vectors 
\[
V\df \{v\in \R^n: h v=v \mbox{ for all } h\in \mathscr R_u(H)\}
\]   
is nonzero and defined over $\Q$. Since $S$ normalizes $H$, the space $V$ is 
also  $S$-invariant.  
If $\mathscr R_u(H)$ is nontrivial, then $V$ is a proper subspace of $\R^n$. Hence
Lemma \ref{lem;observe} implies that either 
$\{a_tx_\Gamma: t\ge 0\}$ or $\{a_tx_\Gamma: t\le 0\}$ is divergent. This contradicts  (1). 
Therefore $\mathscr R_u(H)$ is trivial and $H=H_0$ is semisimple.

Let $Z=Z_G(H)$. 
Note that $N_G(H)$ and $HZ$ have the same connected component, see \cite{p98}.
So by (3) we have $S\le HZ$. 
  The natural representation of $H$ on $\R^n$ is defined over $\Q$ and is completely 
reducible over $\mathbb Q$. 
Since $S$ is contained in $HZ$, every maximal $\Q$-rational H-invariant  subspace which  has only one $\Q$-isotropic type
 is $S$-invariant.  
By Lemma \ref{lem;observe} and (1) the representation of $H$ on $\R^n$ has only one $\Q$-isotropic type.
Therefore Proposition  \ref{lem;basic} implies that $Z$ is  an abelian group. 
In view of  (3) we get $\mu(Z Hx)=1$.
By pulling $\mu$ back to $Z H/\Gamma_1$ where 
$\Gamma_1=ZH\cap \Gamma$ we get an 
$SF$-invariant and ergodic  probability measure $\mu_1$  on $Z H/\Gamma_1$.
 Note  that $SF$ is generated by Ad-unipotent one parameter subgroups of $ZH$. 
It follows from Ratner's theorem 
that 
$\mu_1$ is a homogeneous measure. Therefore $\mu$ is homogeneous.

\end{proof}

\end{document}